\documentclass[reqno,12pt]{amsart}
\usepackage{amssymb}
\usepackage{mathrsfs}
\usepackage{txfonts}
\usepackage{amsfonts}
\usepackage{amstext}
\usepackage{amssymb}
\usepackage{amsmath}
\usepackage{graphicx}
\usepackage{hyperref}
\usepackage{url}
\usepackage{amssymb}
\usepackage{subfig}

\hypersetup{
        colorlinks   = true,
        citecolor    = blue,
        linkcolor    = blue,
        urlcolor     = blue
}
\allowdisplaybreaks
\newtheorem{theorem}{Theorem}[section]
\newtheorem{proposition}[theorem]{Proposition}
\newtheorem{lemma}[theorem]{Lemma}

\newtheorem{remark}[theorem]{Remark}

\numberwithin{equation}{section}
\newcounter{counterConstant}

\newcommand{\dimH}{\dim_{\mathrm H}}
\newcommand{\ldimB}{\underline{\dim}_{\mathrm B}}
\newcommand{\udimB}{\overline{\dim}_{\mathrm B}}
\newcommand{\norm}[1]{\left\lVert #1\right\rVert}
\newcommand{\Frob}{\mathrm F}

\begin{document}
\title[]{Sub-self-similar sets with distinct Hausdorff and Box dimensions}
\author[Zhang]{Junda Zhang}
\address{School of Mathematics, South China University of Technology,
Guangzhou 510641, China.}
\email{summerfish@scut.edu.cn}

\begin{abstract}
We construct an IFS consisting of three contracting similarities on $\mathbb{R}^{9}$ with a
countable compact sub-self-similar set $E$ satisfying
\[
\dimH E=0
<
\frac{\log 2}{\log 10}
\leq \ldimB E.
\]
This provides a concrete counterexample to the question raised by
Falconer concerning the equality of Hausdorff and box dimensions for
sub-self-similar sets without the open set condition.
\end{abstract}

\date{\today}
\subjclass[2020]{28A80,28A78}
\keywords{Sub-self-similar set, Hausdorff dimension, box dimension}
\maketitle
\tableofcontents

\section{Introduction}

Let $S_1,\dots,S_m$ be contracting similarities of $\mathbb{R}^d$.
Following Falconer \cite{Falconer1995}, a nonempty compact set
$E\subset\mathbb{R}^d$ is called \emph{sub-self-similar} with respect
to the IFS $\{S_1,\dots,S_m\}$ if
\[
E\subseteq \bigcup_{i=1}^{m}S_i(E).
\]

For a nonempty bounded set $F\subset\mathbb{R}^d$, denote by $N_\delta(F)$
the minimum number of sets of diameter at most $\delta$ required
to cover $F$. Its lower and upper box dimensions are
\[
\ldimB F
=
\liminf_{\delta\downarrow0}
\frac{\log N_\delta(F)}{-\log\delta}
\]
and
\[
\udimB F
=
\limsup_{\delta\downarrow0}
\frac{\log N_\delta(F)}{-\log\delta},
\]
respectively.

Falconer \cite{Falconer1995} proved that if the corresponding IFS satisfy the open set condition, then the Hausdorff, lower box, and
upper box dimensions of the sub-self-similar set coincide. He asked whether these dimensions still coincide when the open set condition is removed, and conjectured that the answer should be negative.
This paper confirms his conjecture and provide a concrete countable set as a counterexample in $\mathbb{R}^{9}.$

\begin{theorem}\label{thm:main}
There exist an IFS consisting of three contracting similarities
\[
S_0,S_A,S_B:\mathbb{R}^{9}\longrightarrow\mathbb{R}^{9}
\]
and a countable compact set $E\subset\mathbb{R}^{9}$ such that
\[
E\subseteq S_0(E)\cup S_A(E)\cup S_B(E)
\]
with
\[
0=\dimH E
<
\frac{\log 2}{\log 10}
\leq \ldimB E.
\]
\end{theorem}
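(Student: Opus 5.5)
Any countable compact set has Hausdorff dimension $0$, so the equality $\dimH E=0$ comes for free. The work is to build a countable compact $E$ that is sub-self-similar for three similarities and still has $N_{\delta}(E)\gtrsim \delta^{-\log 2/\log 10}$ at every scale. I will therefore take $E$ to be a convergent sequence together with its limit, $E=\{0\}\cup\{x_n\}_{n\ge1}$ with $x_n\to0$. The points $x_n$ must be spread so that at scale $10^{-k}$ roughly $2^k$ of them are $10^{-k}$-separated. A sequence of this type has lower box dimension controlled by how slowly $|x_n|$ decays. For example, if $|x_n|\approx n^{-\alpha}$ on a line, the box dimension is $1/(1+\alpha)$. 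In higher dimensions one can instead place the points on a spiral or a "Cantor-like" shell structure. The role of $\mathbb R^9$ is to give enough room for the orthogonal parts of the similarities to generate such a pattern.

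\textbf{The three maps.} $S_0$ is a contraction fixing $0$, say with ratio $1/10$, so that $S_0(0)=0$ covers the limit point. The maps $S_A$ and $S_B$ have ratios close to $1$ and orthogonal parts chosen so that the points $x_n$ are generated recursively. Concretely, I would index the points by finite words and require each $x_n$ to equal $S_A(x_m)$ or $S_B(x_m)$ for some $m$, or $S_0(x_m)$. The key device is the following. Although the semigroup generated by $S_A,S_B$ acting on all of $\mathbb R^9$ would produce an uncountable attractor, $E$ only needs to satisfy $E\subseteq\bigcup S_i(E)$, not equality. So I choose a countable orbit: $E=\{0\}\cup\{T(p):T\in\mathcal W\}$, where $p$ is a base point and $\mathcal W$ is a carefully chosen countable family of compositions closed under "removing the first letter". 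Closure under removing the first letter is exactly what gives the sub-self-similarity, since $T=S_iT'$ with $T'\in\mathcal W$ implies $T(p)\in S_i(E)$; the base point $p$ must itself be an image, e.g.\ $p=S_0(q)$ with $q$ in the set, or I can take $p$ fixed by some map. Compactness requires every accumulation point of $\{T(p)\}$ to lie in $E$. I would arrange that all accumulation points equal $0$ by making words in $\mathcal W$ of length $k$ contain many $S_0$ letters: $\mathcal W$ consists of words $S_{w_1}S_0^{j_1}\cdots$ with $j$'s growing.

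\textbf{Lower box bound.} The counting at scale $10^{-k}$ comes from words with $k$ free binary choices $A/B$ interleaved with $S_0$'s of total contraction about $10^{-k}$. I need these $2^k$ images to be $\gtrsim 10^{-k}$-separated, i.e.\ the pair $S_A,S_B$ must behave freely, with no coincidences at the relevant scale. Taking $S_A,S_B$ to act by translations in different orthogonal coordinate blocks (hence the many dimensions), combined with the ratio $1/10$ in $S_0$, gives a digit-expansion argument. This yields separation $\ge c\,10^{-k}$ and hence $N_{10^{-k}}(E)\gtrsim 2^k$, so $\ldimB E\ge \log2/\log10$, after interpolating between the scales $10^{-k}$.

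\textbf{Main obstacle.} The hard step is reconciling compactness and countability (all limits collapse to $0$) with the separation needed for the box count, while closure under prefix-deletion forces $\mathcal W$ to be infinite in a controlled way. Prefix-closure tends to generate branches whose limits are new points. I would first try making each word pass through enough $S_0$ factors that any infinite path is eventually driven to $0$, and then verify that the $2^k$ separated points at scale $10^{-k}$ survive this sparsification.
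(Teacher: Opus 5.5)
\textbf{There is a genuine gap.} Your proposal never fixes a concrete IFS and set, and it leaves its own ``main obstacle'' unresolved. More importantly, the design you sketch cannot work.

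You want $0$ to be the only accumulation point of $E$, $S_0$ to fix $0$, and $S_A,S_B$ to act by translations, so $S_A(0)\neq0$. Then $E\cap S_A(E)$ is finite. Indeed, suppose it contained infinitely many distinct points $x_n=S_A(y_n)$ with $y_n\in E$. A subsequence would converge to $0$, and then the distinct points $y_n=S_A^{-1}(x_n)$ of $E$ would converge to $S_A^{-1}(0)\neq0$, giving a second accumulation point. The same holds for $S_B$, so $E\subseteq F\cup S_0(E)$ with $F$ finite. Iterating gives $E\subseteq F\cup S_0(F)\cup\cdots\cup S_0^{k-1}(F)\cup S_0^{k}(E)$, hence $N_\delta(E)=O(\log(1/\delta))$ and $\udimB E=0$.

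So no sparsification can rescue the translation-based digit expansion. Concretely, inserting blocks $S_0^{j_m}$ does not drive infinite paths to $0$: the limit of $S_{i_1}S_0^{j_1}S_{i_2}S_0^{j_2}\cdots(p)$ lies near $S_{i_1}(0)\neq 0$. And letting $j_m\to\infty$ means that at scale $10^{-k}$ only $m=o(k)$ binary choices are resolved, where $j_1+\cdots+j_m\approx k$.

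The same argument tells you what is forced. The maps used infinitely often must fix the accumulation point, so the separation has to come from their orthogonal parts. A map moving $0$ can only be used finitely often, to create a base point. Your orbit skeleton ($E=\{0\}\cup\{T(p)\}$ with $\mathcal W$ closed under deleting the first letter) is right, but the roles of the maps must be reversed.

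The paper takes $S_A(X)=\frac12AX$ and $S_B(X)=\frac12BX$ on $M_3(\mathbb{R})\cong\mathbb{R}^9$, which fix $0$, and uses $S_0(X)=I_3+\frac12X$ only once, via $I_3=S_0(0)$. Then:
\begin{itemize}
\item the level-$n$ points $2^{-n}R_w$ lie on the sphere of radius $\sqrt3\,2^{-n}$, so $0$ is the only accumulation point;
\item \'{S}wierczkowski's theorem ($A,B$ are rotations by $\arccos\frac35$ about perpendicular axes, hence generate a free group) makes the $2^n$ products distinct;
\item their entries lie in $5^{-n}\mathbb{Z}$, so distinct products satisfy $\norm{R_u-R_v}_{\Frob}\geq5^{-n}$.
\end{itemize}
Hence $E_n$ is $10^{-n}$-separated and $\ldimB E\geq\log2/\log10$. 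The missing idea is this combination: a free semigroup of rotations with a common fixed point, plus an integrality bound on their separation.
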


We will view a $3\times 3$ matrix $A$ (equipped with the Frobenius norm ) as a vector in $\mathbb{R}^{9}$(equipped with the Euclidean metric). The construction uses two rotations that
belong to a classical class of free rotation pairs, see
\cite{Swierczkowski1994}. The organization of this paper is as follows. We first present the precise construction. Then we calculate the lower bound of the lower box dimension with the help of an estimate on the matrix product. Finally, we end with some discussions.

\section{The construction}

Consider the matrices
\[
A=
\begin{pmatrix}
1&0&0\\[2mm]
0&\frac35&-\frac45\\[2mm]
0&\frac45&\frac35
\end{pmatrix},
\qquad
B=
\begin{pmatrix}
\frac35&-\frac45&0\\[2mm]
\frac45&\frac35&0\\[2mm]
0&0&1
\end{pmatrix}.
\]
Thus $A$ and $B$ are rotations through the angle $\theta$ satisfying
\[
\cos\theta=\frac35,
\qquad
\sin\theta=\frac45,
\]
about the $x$-axis and the $z$-axis, respectively. In particular,
$A,B\in \operatorname{SO}(3).$

For a finite word
\[
w=i_1\cdots i_n\in\{A,B\}^n
\]
with letter $A$ or $B$, we write the corresponding matrix (according to the same order) by
\[
R_w=i_1\cdots i_n.
\]
For the empty word $\varnothing$, let $R_\varnothing=I_3$. Define
\[
\mathcal{W}_n
=
\{R_w:w\in\{A,B\}^n\},
\qquad
\mathcal{W}_0=\{I_3\}.
\]

Let
\[
\mathbb{M}=M_3(\mathbb{R}),
\]
identified with $\mathbb{R}^{9}$ and equipped with the Frobenius inner
product
\[
\langle X,Y\rangle_{\Frob}
=
\operatorname{tr}(X^{\mathsf T}Y)
\]
and Frobenius norm
\[
\norm{X}_{\Frob}
=
\bigl(\operatorname{tr}(X^{\mathsf T}X)\bigr)^{1/2}.
\]

If $R\in\operatorname{SO}(3)$, then clearly
\[
\langle RX,RY\rangle_{\Frob}
=
\operatorname{tr}(X^{\mathsf T}R^{\mathsf T}RY)
=
\operatorname{tr}(X^{\mathsf T}Y)
=
\langle X,Y\rangle_{\Frob}.
\]

Define
\[
S_A(X)=\frac12AX,
\qquad
S_B(X)=\frac12BX,
\qquad
S_0(X)=I_3+\frac12X.
\]
Each of these maps is a contracting similarity of $\mathbb{M}$ with contraction ratio $1/2$.

For $n\geq0$, let
\[
E_n=2^{-n}\mathcal{W}_n
=
\{2^{-n}R:R\in\mathcal{W}_n\},
\]
and our construction is given by
\[
E=\{0\}\bigcup\cup_{n=0}^{\infty}E_n.
\]

\begin{proposition}\label{prop:compact}
The set $E$ is countable and compact.
\end{proposition}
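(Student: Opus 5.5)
Countability is immediate: each $\mathcal{W}_n$ has at most $2^n$ elements, since it is the image of the finite set $\{A,B\}^n$ under $w\mapsto R_w$. Hence each $E_n$ is finite, and $E$ is a countable union of finite sets together with the single point $0$. Whether distinct words give distinct matrices (freeness of $A,B$) plays no role here.

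For compactness in $\mathbb{M}\cong\mathbb{R}^9$, I will show that $E$ is bounded and closed. Every $R\in\operatorname{SO}(3)$ satisfies
\[
\norm{R}_{\Frob}^2=\operatorname{tr}(R^{\mathsf T}R)=\operatorname{tr}(I_3)=3 .
\]
Each $R_w$ is a product of elements of $\operatorname{SO}(3)$, so it lies in $\operatorname{SO}(3)$. Therefore every point of $E_n$ has Frobenius norm exactly $\sqrt3\,2^{-n}$, and $E$ lies in the closed ball of radius $\sqrt3$ about the origin.

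To prove closedness, let $X_k\in E$ converge to some $X\in\mathbb{M}$; I must show $X\in E$. Each $X_k\ne0$ lies in a unique $E_{n_k}$, because the norms $\sqrt3\,2^{-n}$ are distinct for distinct $n$. Two cases arise.
\begin{itemize}
\item If $\{n_k\}$ is unbounded, a subsequence has $n_k\to\infty$. Then $\norm{X_k}_{\Frob}=\sqrt3\,2^{-n_k}\to0$, so $X=0\in E$.
\item If $\{n_k\}$ is bounded, infinitely many $X_k$ lie in the finite set $E_0\cup\cdots\cup E_N$ for some $N$. That set is closed, so $X$ belongs to it and hence to $E$.
\end{itemize}
(Terms with $X_k=0$ are handled trivially.)

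Equivalently, one can observe that the only accumulation point of $E$ is $0$, which lies in $E$. There is no real obstacle; the only point needing care is the norm identity $\norm{R}_{\Frob}=\sqrt3$ for rotations. That identity pins every element of $E_n$ to the sphere of radius $\sqrt3\,2^{-n}$, so the layers $E_n$ accumulate only at $0$.
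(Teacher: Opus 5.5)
Your proof is correct and follows essentially the paper's argument: both use $\norm{R}_{\Frob}=\sqrt3$ to place $E_n$ on the sphere of radius $\sqrt3\,2^{-n}$, then split sequences by whether the layer index is unbounded (limit $0\in E$) or bounded (points confined to a finite set). The only difference is packaging: you verify closed-and-bounded and invoke Heine--Borel, while the paper checks sequential compactness directly.
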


\begin{proof}
Clearly, since each set $E_n$ is finite, $E$ is countable. To see the compactness, note that each set $E_n$ is located on a sphere of $\sqrt{3}\,2^{-n}$ since when $R\in\operatorname{SO}(3)$,
\[
\norm{R}_{\Frob}^2
=
\operatorname{tr}(R^{\mathsf T}R)
=
\operatorname{tr}(I_3)
=
3.
\]
Thus, every sequence in $E$ has a subsequence either converging to 0 as
\[
\sup_{X\in E_n}\norm{X}_{\Frob}
=
\sqrt{3}\,2^{-n}
\longrightarrow0,
\]
or all elements are uniformly bounded away from 0, which implies the existence of a constant subsequence. Since $\mathbb{M}\cong\mathbb{R}^{9}$ is a metric space, $E$ is compact.
\end{proof}

\begin{proposition}\label{prop:sss}
The set $E$ is sub-self-similar with respect to
$\{S_0,S_A,S_B\}$,namely,
\[
E\subseteq S_0(E)\cup S_A(E)\cup S_B(E).
\]
\end{proposition}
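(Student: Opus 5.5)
We verify the inclusion pointwise, splitting according to the decomposition $E=\{0\}\cup\bigcup_{n\ge0}E_n$. There are three kinds of points: the origin, the single point $I_3$ of $E_0$, and the points $2^{-n}R_w$ of $E_n$ with $n\ge1$. For each kind we exhibit one of the three maps and a preimage lying in $E$.

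\emph{The origin.} Since $0=\frac12A\cdot 0$, we have $0=S_A(0)\in S_A(E)$.

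\emph{The point $I_3$.} This is the only place where $S_0$ is needed. Because $S_0(0)=I_3+\frac12\cdot 0=I_3$ and $0\in E$, we get $I_3\in S_0(E)$. Note that $I_3$ cannot be obtained from $S_A$ or $S_B$. Its preimages under them would be $2A^{-1}$ and $2B^{-1}$, which have Frobenius norm $2\sqrt3$. Every point of $E$ has norm at most $\sqrt3$, as in the proof of Proposition~\ref{prop:compact}, so neither preimage lies in $E$.

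\emph{Points of $E_n$ with $n\ge1$.} Take $X=2^{-n}R_w$ with $w=i_1i_2\cdots i_n\in\{A,B\}^n$, and let $w'=i_2\cdots i_n$ (the empty word if $n=1$). Then $R_w=i_1R_{w'}$, and $Y:=2^{-(n-1)}R_{w'}\in E_{n-1}\subseteq E$. A direct computation gives
\[
S_{i_1}(Y)=\tfrac12\, i_1\, 2^{-(n-1)}R_{w'}=2^{-n}R_w=X,
\]
so $X\in S_{i_1}(E)$. For $n=1$ this uses $E_0=\{I_3\}$ together with the convention $R_\varnothing=I_3$.

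No step is a real obstacle; the argument is just bookkeeping of word concatenation. The only points needing care are the two special points $0$ and $I_3$, and in particular the observation that $S_0$ is required precisely to cover $I_3$, with $0$ as its preimage.
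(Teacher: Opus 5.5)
Your proposal is correct and follows essentially the same argument as the paper: $0=S_A(0)$, $I_3=S_0(0)$, and $2^{-n}R_w=S_{i_1}\bigl(2^{-(n-1)}R_{i_2\cdots i_n}\bigr)$ for $n\ge1$. Your additional observation that $I_3\notin S_A(E)\cup S_B(E)$, proved via the norm bound, is correct but is not needed for the inclusion.
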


\begin{proof}
Note that
\[
0=S_A(0)
\]
and
\[
I_3=S_0(0),
\]
so both $0$ and the unique point in $E_0$ belong to the right-hand side.
It remains to check  $E_n$. Let $n\geq1$ and let
\[
R_w=i_1i_2\cdots i_n\in\mathcal{W}_n,
\qquad
i_j\in\{A,B\}.
\]
We have
\[
2^{-n}R_w
=
S_{i_1}
\left(
2^{-(n-1)}i_2\cdots i_n
\right).
\]
Therefore each
point in $E_n$ belongs to either $S_A(E)$ or $S_B(E)$, showing the
desired.
\end{proof}

\section{Proof of Theorem~\ref{thm:main}}

We need the following lemma to estimate the lower box dimension.
\begin{lemma}\label{lem:free-semigroup}
For every $n\geq1$, the map
\[
w\longmapsto R_w,
\qquad
\{A,B\}^n\longrightarrow \operatorname{SO}(3),
\]
is injective. Consequently,
\[
\#\mathcal{W}_n=2^n.
\]
Moreover, if $u,v\in\{A,B\}^n$ are distinct, then
\[
\norm{R_u-R_v}_{\Frob}\geq 5^{-n}.
\]
\end{lemma}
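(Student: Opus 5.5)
Everything will rest on the integrality structure of the two rotations: $5A$ and $5B$ are integer matrices. Hence for any word $w$ of length $n$, the matrix $5^nR_w$ has integer entries. If $u\neq v$ have the same length $n$, then $5^n(R_u-R_v)$ is an integer matrix. If it is nonzero, some entry has absolute value at least $1$, and so
\[
\norm{R_u-R_v}_{\Frob}\geq 5^{-n}.
\]
The separation bound therefore follows from injectivity, and the cardinality statement $\#\mathcal{W}_n=2^n$ is immediate from injectivity as well. The whole task reduces to showing $R_u\neq R_v$ for distinct words of equal length.

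For injectivity, suppose $R_u=R_v$ with $u\neq v$. First I will strip off the longest common prefix, using invertibility, and the longest common suffix. This leaves two words of equal length whose first letters differ, say $u=Au'$ and $v=Bv'$. Then $AR_{u'}=BR_{v'}$, i.e. $R_{v'}^{-1}B^{-1}AR_{u'}=I_3$.

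To rule this out I would use the classical mod-$5$ argument of Świerczkowski for these specific rotations, as cited in the paper. The concrete route is to track the integer vectors $5^{k}R_w e$ modulo $5$ for a well-chosen starting vector $e$. Setting $\bar A=5A \bmod 5$ and $\bar B=5B\bmod 5$, one has
\[
\bar A=\begin{pmatrix}0&0&0\\0&3&1\\0&4&3\end{pmatrix},\qquad
\bar B=\begin{pmatrix}3&1&0\\4&3&0\\0&0&0\end{pmatrix}\pmod 5 .
\]
The plan is to show that for any nonempty word $w$, the reduction of $5^{|w|}R_w$ modulo $5$ has a nonzero first column or row whose pattern is determined by the \emph{first} letter of $w$. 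The key step is to check that a product $\bar A\bar B$ or $\bar B\bar A$ is nonzero mod $5$ and preserves the relevant invariant subspaces. Concretely, the image of $\bar A$ lies in the span of $(0,1,2)^{\mathsf T}$, since its columns are $(0,3,4)^{\mathsf T}=3(0,1,3)^{\mathsf T}$ after computing. The image of $\bar B$ lies in the span of a vector supported on the first two coordinates. One then shows by induction that a word beginning with $A$ yields a nonzero mod-$5$ matrix with image in the $A$-line, and a word beginning with $B$ yields one with image in the $B$-line. These lines are distinct, so $5^nR_{Au'}\not\equiv 5^nR_{Bv'}\pmod 5$, and in particular $R_{Au'}\neq R_{Bv'}$. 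Working directly with the prefix-differing words $Au'$ and $Bv'$ of equal length avoids needing inverses at all.

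The main obstacle is the induction step: that $\bar A$ applied to the $B$-line is nonzero, and symmetrically that $\bar B$ applied to the $A$-line is nonzero. This must also hold for repeated letters, i.e. $\bar A$ must not kill its own line, so the product never degenerates mod $5$. This is a finite computation with $3\times3$ matrices over $\mathbb{F}_5$. I would verify it explicitly, adjusting the choice of invariant lines (or passing to the transpose and working with row spaces) if the first choice fails.
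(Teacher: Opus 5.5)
Your proposal is correct. The separation step (a nonzero integer matrix $5^n(R_u-R_v)$ has Frobenius norm at least $1$) is exactly the paper's; the difference lies in how you get injectivity.

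The paper invokes \'Swierczkowski's theorem that $A,B$ generate a free group of rank $2$, and reads off injectivity on positive words. You instead prove only the free-semigroup property the lemma needs, by reduction mod $5$ after cancelling the common prefix over $\mathbb{R}$.

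The finite check you left open does succeed:
\begin{itemize}
\item The shared block $\begin{pmatrix}3&1\\4&3\end{pmatrix}$ has determinant $5\equiv0$ and trace $6\equiv1$. So $\bar A,\bar B$ are rank-one idempotents over $\mathbb{F}_5$, with column spaces $L_A=\operatorname{span}(0,1,3)^{\mathsf T}$ and $L_B=\operatorname{span}(1,3,0)^{\mathsf T}$.
\item Moreover $\bar A(1,3,0)^{\mathsf T}=4(0,1,3)^{\mathsf T}$ and $\bar B(0,1,3)^{\mathsf T}=(1,3,0)^{\mathsf T}$. Thus each of $\bar A,\bar B$ maps both lines isomorphically onto its own line.
\item By induction, $\bar i_1\cdots\bar i_n\equiv 5^nR_w$ is nonzero with column space $L_{i_1}$. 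Hence words of equal length with different first letters are distinguished mod $5$.
\end{itemize}

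One slip to fix: the $A$-line is $\operatorname{span}(0,1,3)^{\mathsf T}$, not $\operatorname{span}(0,1,2)^{\mathsf T}$. The latter spans the row space of $\bar A$, and as a column vector it lies in $\ker\bar A$. Used literally as the invariant line, it would break your induction.

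Your route gives a short, self-contained argument. It uses the same $5$-adic integrality as the separation bound and never has to handle inverse letters. The paper's route is shorter on the page, but it rests entirely on a cited free-group theorem that is stronger than the lemma requires.
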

     We will use the following Świerczkowski's Free Rotation Theorem in  \cite{Swierczkowski1994}. Recall that a \emph{rotation} in $SO(3)$ is an element whose matrix has trace $1+2\cos\theta$ for some $\theta\in[0,\pi]$; the real number $\theta$ is its \emph{angle of rotation}, and the line through the origin fixed by the rotation is its \emph{axis}.
\begin{theorem}[Świerczkowski, 1958]
\label{thm:swierczkowski}
Let $R_{1}, R_{2}\in SO(3)$ be two rotations with the same angle $\theta\in(0,\pi)$ and with mutually perpendicular axes. Suppose that $\cos\theta\in\mathbb{Q}$. Then the subgroup
\[
\langle R_{1}, R_{2} \rangle := \{ \, w(R_{1},R_{2}) \mid w \text{ is a word in the free group } F_{2} \,\}
\]
is a free group of rank $2$ if and only if
\[
\cos\theta \notin \left\{ 0,\; \pm\frac{1}{2},\; \pm 1 \right\}.
\]
\end{theorem}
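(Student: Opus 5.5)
We reduce to an explicit normal form, dispose of the easy direction by finite orders, and prove freeness by an $\ell$-adic leading-term argument for a prime $\ell$ dividing the denominator of $\cos\theta$.

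\textbf{Normal form.} Conjugating by an orthogonal matrix does not affect freeness. So we may assume $R_1$ is the rotation by $\theta$ about $e_x$ and $R_2$ the rotation by $\theta$ about $e_z$. Write $c=\cos\theta=p/q$ in lowest terms with $q>0$, and $s=\sin\theta$. For a unit vector $e$ put $P_e=I-ee^{\mathsf T}$, and let $J_e$ be the cross-product matrix $v\mapsto e\times v$, so that $J_e^2=-P_e$ and $P_eJ_e=J_e$. Then for every $k\in\mathbb{Z}$,
\[
R_e^k=ee^{\mathsf T}+\cos(k\theta)\,P_e+\sin(k\theta)\,J_e .
\]

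\textbf{Only if.} Since $\theta\in(0,\pi)$, the values $c=\pm1$ cannot occur. If $c\in\{0,\pm\tfrac12\}$, then $\theta\in\{\pi/2,\pi/3,2\pi/3\}$, so $R_1$ has order $4$, $6$ or $3$. Hence $R_1^k=I$ for some $k\ge 1$, a nontrivial relation, and the group is not free.

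\textbf{If.} Now suppose $c\notin\{0,\pm\tfrac12\}$. Then $2c$ is not an integer, so $q\notin\{1,2\}$. Choose a prime $\ell$ with either $\ell$ odd and $\ell\mid q$, or $\ell=2$ and $4\mid q$; in both cases $v_\ell(2c)<0$. Extend $v_\ell$ to the field $K=\mathbb{Q}(s)$, where $s^2=1-c^2$.

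Chebyshev's identities give $2\cos k\theta=(2c)^{|k|}+(\text{integer combination of lower powers of }2c)$ and $\sin k\theta=s\,U_{|k|-1}(c)$, with $U$ having leading coefficient $2^{|k|-1}$. Hence, up to terms of strictly larger valuation,
\[
R_e^{k}\;\approx\;2^{|k|-1}c^{|k|}\bigl(P_e\pm tJ_e\bigr),\qquad t=s/c,\quad t^2=c^{-2}-1 .
\]
The relation $t^2\equiv-1$ holds modulo the maximal ideal because $v(c^{-2})>0$. In the residue field, $t$ therefore becomes an element $i$ with $i^2=-1$. The leading matrix reduces to $Q_e^{\pm}=P_e\pm iJ_e$, and a direct computation gives $(Q_e^{\pm})^2=2Q_e^{\pm}$. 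For odd $\ell$ this means $Q_e^{\pm}$ is twice a rank-one idempotent, so $Q_e^{\pm}=u_e^{\pm}(w_e^{\pm})^{\mathsf T}$ for isotropic vectors: $u_e^{\pm}$ is proportional to $e'\mp i\,e\times e'$ with $e'\perp e$.

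Take a nontrivial reduced word $W=R_{e_1}^{k_1}\cdots R_{e_r}^{k_r}$, where consecutive axes alternate between $e_x$ and $e_z$ and all $k_j\ne0$. After multiplying by a suitable power of $c$ (uniformizer), $W$ reduces to a nonzero scalar times
\[
Q_{e_1}^{\pm}\cdots Q_{e_r}^{\pm}=u_1\Bigl(\prod_{j}(w_j^{\mathsf T}u_{j+1})\Bigr)w_r^{\mathsf T}.
\]
It remains to check explicitly that every pairing $w_x^{\pm\,\mathsf T}u_z^{\pm}$ and $w_z^{\pm\,\mathsf T}u_x^{\pm}$ is nonzero in the residue field. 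This uses perpendicularity: the isotropic lines in the planes $e_x^\perp$ and $e_z^\perp$ are $(0,1,\pm i)$ and $(1,\pm i,0)$, and all their bilinear pairings equal $\pm i\neq0$. Consequently $W$ has an entry of valuation exactly $\sum_j|k_j|\,v(c)<0$. Since $I$ has integral entries, $W\ne I$, and the group is free.

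\textbf{Main obstacle.} The delicate step is the case where the only available prime is $\ell=2$ with $4\mid q$. There the factors $2^{|k|-1}$ and the identity $Q^2=2Q$ interfere with the reduction mod $\ell$, and $i$ does not give a clean rank-one splitting in characteristic $2$. My first attempt would be to renormalise by $(2c)^{|k|}$ instead of $c^{|k|}$, work with the matrices $\tfrac12 Q_e^{\pm}$ over the $2$-adic integers of $K$, and recompute the leading pairings directly. If that fails, I would fall back on Świerczkowski's original combinatorial induction on word length, applied to the coordinates of $W e$ written in $\mathbb{Z}[1/q][s]$.
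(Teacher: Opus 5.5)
\textbf{Verdict: there is a genuine gap.} The ``if'' direction is not proved when $c=p/2^m$ with $p$ odd and $m\ge 2$ (e.g.\ $c=\pm\tfrac14,\pm\tfrac34$). For these values there is no odd $\ell$, and you only list fallback strategies rather than an argument. The gap is easy to close with your own method.

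\textbf{What is already correct.} Your argument works whenever the denominator $q$ of $c=\cos\theta$ has an odd prime factor $\ell$. The normalized factors $(2^{|k|-1}c^{|k|})^{-1}R_e^{k}$ have entries in the valuation ring and reduce to the rank-one matrices $P_e\pm iJ_e$. Because a reduced word alternates axes, only the cross pairings between isotropic vectors of $e_x^{\perp}$ and $e_z^{\perp}$ enter, and these are $\pm i\neq 0$. There is one harmless slip: the image of $P_e+iJ_e$ is spanned by $e'+i\,e\times e'$, while $e'-i\,e\times e'$ spans its kernel. Since you check all sign combinations, nothing changes. The odd-$\ell$ case already covers $\cos\theta=\tfrac35$ with $\ell=5$, which is all the paper uses; the paper itself simply quotes the theorem from Świerczkowski.

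\textbf{How to close the gap.} The obstacle you anticipate does not arise. Idempotency of $\tfrac12 Q_e^{\pm}$ is never used, because consecutive axes in a reduced word differ, so $Q_e^2$ never occurs. All you need is that each normalized factor reduces to a rank-one matrix and that the cross pairings are nonzero. No renormalization is needed either: your normalizer $2^{|k|-1}c^{|k|}$ already equals $\tfrac12(2c)^{|k|}$.

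Let $\mathfrak p$ be a prime of $K$ above $2$, with $v(2)=1$.
\begin{itemize}
\item Since $v(2c)=1-m\le -1$, relative to this normalizer the term $ee^{\mathsf T}$ gains valuation $1-|k|\,v(2c)\ge 2$. The lower Chebyshev terms gain at least $-2v(2c)\ge 2$. So the leading-term reduction remains valid, and $t$ is still a unit.
\item $(t-1)^2\equiv t^2+1=c^{-2}\equiv 0 \pmod{\mathfrak p}$, so $t\equiv -t\equiv 1$.
\item Hence the normalized factors reduce to $P_x+J_x\equiv u_xu_x^{\mathsf T}$ and $P_z+J_z\equiv u_zu_z^{\mathsf T}$, with $u_x=(0,1,1)^{\mathsf T}$ and $u_z=(1,1,0)^{\mathsf T}$. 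These are rank one (though nilpotent), and $u_x^{\mathsf T}u_z=1\neq 0$.
\end{itemize}
Therefore $W$ has an entry of valuation $\sum_j\bigl(|k_j|\,v(2c)-1\bigr)<0$, so $W\neq I$. For example, with $c=\tfrac14$ the product $R_1R_2$ has the entry $c^2=\tfrac1{16}$, of valuation $-4$, exactly as predicted.
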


\begin{proof}
By  Świerczkowski's Free Rotation Theorem, the
map $w\mapsto R_w$ is injective and
\[
\#\mathcal{W}_n=2^n.
\]
Note that every entry of $R_w$ belongs to $5^{-n}\mathbb{Z}$. If
$u\neq v$, then $R_u-R_v$ is a nonzero matrix whose entries belong to
$5^{-n}\mathbb{Z}$, and hence
\[
\norm{R_u-R_v}_{\Frob}\geq 5^{-n}.
\]
\end{proof}
We are now in a position to prove Theorem~\ref{thm:main}.

\begin{proof}
By Propositions~\ref{prop:compact} and
\ref{prop:sss}, it remains  to estimate  the dimensions. Since $E$ is countable,
\[
\dimH E=0.
\]
By
Lemma~\ref{lem:free-semigroup}, the set $E_n$ contains exactly $2^n$
points. If $u,v\in\{A,B\}^n$ are distinct, then
\[
\begin{aligned}
\norm{2^{-n}R_u-2^{-n}R_v}_{\Frob}=
2^{-n}\norm{R_u-R_v}_{\Frob}\geq
2^{-n}5^{-n}=
10^{-n}.
\end{aligned}
\]
Thus the points of $E_n$ are pairwise $10^{-n}$-separated.

Let
\[
\delta_n=\frac12\,10^{-n}.
\]
Any set of diameter at most $\delta_n$ contains at most one point of
$E_n$, showing
\[
N_{\delta_n}(E)\geq2^n.
\]
Thus when
$
\delta_{n+1}<\delta\leq\delta_n,
$
we have
\[
N_\delta(E)\geq2^n.
\]
It follows that
\[
\frac{\log N_\delta(E)}{-\log\delta}
\geq
\frac{n\log2}
{(n+1)\log10+\log2}.
\]
Letting $\delta\downarrow0$, and hence $n\to\infty$, gives
\[
\ldimB E
\geq
\frac{\log2}{\log10}.
\] The proof is now complete.
\end{proof}

\section{Further discussions}

Recall that an iterated function system
$\{T_1,\dots,T_m\}$ satisfies the \emph{open set condition} if there
exists a bounded nonempty open set $V$ such that
\[
\bigcup_{i=1}^{m}T_i(V)\subseteq V
\]
and the sets $T_i(V)$ are pairwise disjoint. Due to Falconer's result \cite{Falconer1995}, we have the following.

\begin{proposition}\label{prop:no-osc}
The iterated function system $\{S_0,S_A,S_B\}$ does not satisfy the
open set condition.
\end{proposition}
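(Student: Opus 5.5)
We argue by contradiction, using Falconer's theorem from \cite{Falconer1995} recalled in the introduction: if the IFS defining a sub-self-similar set satisfies the open set condition, then its Hausdorff, lower box and upper box dimensions coincide. Suppose $\{S_0,S_A,S_B\}$ satisfied the open set condition. By Propositions~\ref{prop:compact} and \ref{prop:sss}, $E$ is a nonempty compact sub-self-similar set for this IFS. Falconer's result then forces $\dimH E=\ldimB E$.

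Theorem~\ref{thm:main} (whose proof is complete above) gives
\[
\dimH E=0<\frac{\log 2}{\log 10}\leq \ldimB E,
\]
which is a contradiction. Hence the open set condition fails.

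The only point to check is that the hypotheses of Falconer's theorem are met. The maps must be contracting similarities of a Euclidean space. Here $\mathbb{M}$ with the Frobenius norm is isometric to $\mathbb{R}^9$. Moreover, $X\mapsto \frac12 RX$ with $R\in\operatorname{SO}(3)$ scales Frobenius distances by exactly $\frac12$, by the invariance $\langle RX,RY\rangle_{\Frob}=\langle X,Y\rangle_{\Frob}$ shown earlier. Similarly, $S_0$ is a translate of a homothety with ratio $\frac12$. So this is a legitimate application and no real obstacle arises.

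As an optional independent sanity check, one can argue via Schief's theorem: the open set condition is equivalent to $\mathcal{H}^s(K)>0$ for the attractor $K$, where $s$ is the similarity dimension. Here $3\cdot 2^{-s}=1$ gives $s=\log 3/\log 2>1$. The sub-semigroup generated by $S_A,S_B$ alone already produces overlaps of cylinders of arbitrarily small relative separation, since the rotation group $\langle A,B\rangle$ is dense in $\operatorname{SO}(3)$. That route is much heavier, however, and the contradiction argument above is the one I would present.
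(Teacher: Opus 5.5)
Your argument is correct and is essentially the paper's own: the paper deduces the proposition directly from Falconer's theorem (the open set condition forces $\dimH E=\ldimB E$ for sub-self-similar sets) and the strict inequality of Theorem~\ref{thm:main}. Your optional Schief/Bandt--Graf sketch is not needed, and density of $\langle A,B\rangle$ alone does not immediately give it. If you keep it, justify it by pigeonhole: among the $2^n$ distinct rotations in $\mathcal{W}_n$, two satisfy $R_u^{-1}R_v\to I_3$ with $u\neq v$ of equal length.
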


\begin{remark}
In an upcoming work,  we will present a very different construction in  $\mathbb{R}^{2}$ , based on the examples in \cite{BFM} , such that  the sub-self-similar set has positive Hausdorff dimension and the IFS satisfies the exponential separation condition (see for example \cite{HochmanRd}) .
\end{remark}

The author acknowledges the use of AI in the mathematical exploration and language writing.

\end{document}